\newtheorem{theorem}{Theorem}[section]
\newtheorem{corollary}[theorem]{Corollary}
\newtheorem{proposition}[theorem]{Proposition}
\newtheorem{lemma}[theorem]{Lemma}
\theoremstyle{definition}
\newtheorem{definition}[theorem]{Definition}
\newtheorem{example}[theorem]{Example}
\renewcommand{\phi}{\varphi}
\newcommand*{\rdown}[2]{R_{#1}^{\underline{#2}}}
\DeclareMathOperator{\toep}{toep}
\newcommand*{\rsigma}{\sigma_{\hspace{-0.15em}\scaleobj{0.7}{\rightarrow}}}
\newcommand*{\dsigma}{\sigma_{\hspace{-0.15em}\scaleobj{0.7}{\downarrow}}}
\begin{document}

\title{A combinatorial interpretation of Gaussian blur}
\author{Travis Dillon}
\date{\daymonthyear\today}

\maketitle

\begin{abstract}\noindent
Gaussian blur is a commonly-used method to filter image data. This paper introduces the collapsing sum, a new operator on matrices that provides a combinatorial interpretation of Gaussian blur. We study the combinatorial properties of this operator and prove the explicit relation between Gaussian blur and the collapsing sum.
\end{abstract}


\section{Introduction}\label{sec:introduction}

Image data, and data in general, is often filtered to remove \textit{noise}, random fluctuations that hide the underlying pattern. For images, one of the most common solutions is to apply Gaussian blur, which smooths the data to remove noise.

Because of its use, there has been much interest in discovering efficient algorithms for Gaussian blur \cite{recursive-cosine-gblur,elboher-efficient-gblur,gaussian-blur}. Waltz and Miller \cite{gaussian-blur} in particular provide a clear example of the ways in which properties of binomial coefficients can be leveraged to create such an algorithm. An analysis of their algorithm in Section \ref{sec:background} leads to the following definitions.

\begin{definition}\label{dfn:collapsing sum} Let $A$ be a real $m\times n$ matrix. If $m\geq 2$, then the $(m-1)\times n$ matrix $\dsigma (A)$ has entries
\[ \dsigma (A)_{i,j} = a_{i,j} + a_{i+1,j}.\]
If $n\geq 2$, then the $m\times (n-1)$ matrix $\rsigma(A)$ has entries
\[ \rsigma(A)_{i,j} = a_{i,j} + a_{i,j+1}. \]
Finally, the matrix $\sigma(A):=\dsigma\circ\rsigma(A)=\rsigma\circ\dsigma(A)$ is the \textit{collapsing sum} of $A$.
\end{definition}

The collapsing sum captures mathematically what Waltz and Miller describe computationally in \cite{gaussian-blur}. In this paper, we establish the connection between the collapsing sum and Gaussian blur and provide a theoretical study of the combinatorial properties of this operator.

Section 3 provides the main combinatorial analysis of the operator. We recast the collapsing sum (and therefore Gaussian blur) in terms of matrix multiplication and define a new class of matrices called coefficient matrices that generalize Gaussian blur. The section culminates in Theorem \ref{thm:gaussian blur}, which explicitly describes the connection between the collapsing sum and Gaussian blur.

In the remainder of the paper, we turn to the purely combinatorial properties of this operator; for example, we completely describe the fully-collapsed state of Toeplitz matrices (see Proposition \ref{thm:circulant-fully-collapsed}).
Finally, we discuss generalizations of Gaussian blur in connection to Waltz and Miller's algorithm.


\section{Background}\label{sec:background}
As the collapsing sum will be motivated by Gaussian blur, we begin with a description of image filtering. Grayscale images are stored as matrices: Shades of gray are represented as numbers in a particular range (for example, integers from 0 to 255, or real numbers from 0 to 1), and each entry represents a pixel.\footnote{Whether 0 represents black or white depends on the application; in printing, 0 represents white, whereas in computing, 0 represents black. We won't need to pick between these conventions for our purposes.} We will consider only grayscale images, but this is not an artificial restriction; the same techniques are used to apply a filter to color images. The data for color images are stored as three separate values of red, green, and blue. Applying a filter to a color image consists of separating the data into three matrices by color type, applying the filter to each, and recombining.

It may be the case that the image contains noise, so that the pixel values are randomly perturbed by environmental factors. Because noise is random, it seems possible to eliminate it by averaging pixel values in the neighborhood of a central pixel. This process is known as filtering.

Filters are applied in a process called convolution. The matrix that represents the filter is called a \textit{kernel matrix}. Typically, kernel matrices are square with dimensions $(2r+1)\times (2r+1)$. The integer $r$ is the \textit{radius} of the filter and controls the size of the neighborhood. For simplicity in the convolution formula, kernel matrices are indexed so that the central entry has coordinates (0,0). Convolving the kernel matrix $K=(k_{i,j})$ with an $m\times n$ image matrix $A$ returns the  $m\times n$ matrix $K\ast A$ with entries 
\[(K*A)_{p,q} := \sum_{i,j=-r}^r k_{i,j}\cdot a_{p-i,q-j}.\]
The convolution can be equivalently expressed as
\[ (K\ast A)_{p,q} = \sum_{\substack{i+k = p \\ j+\ell=q}} k_{i,j}\cdot a_{k,\ell}. \]
We require that $\sum_{i,j}k_{i,j} = 1$ so that the overall intensity of the image does not change.

As written, however, convolution is not well-defined when $a_{p,q}$ is near a boundary of $A$. In these cases, the convolution formula requires values of entries that don't exist, such as $a_{-1,0}$. To fix this problem, we use what are called \textit{edge-handling techniques}. In this paper, we only consider two common techniques: extending $A$ to have values beyond its edges or applying the filter to only those pixels for which convolution is defined (the latter is called \textit{cropping}).\footnote{See  \url{https://en.wikipedia.org/wiki/Kernel\_(image\_processing)} for a list of edge-handling techniques.} To apply a kernel matrix of radius $r$ to all pixels in an $m\times n$ matrix $A$, we need to extend $A$ by $r$ rows and columns on each side, to a matrix of size $(m+2r) \times (n+2r)$, where the central $m\times n$ block is the matrix $A$. The filter is applied to each pixel in the central $m\times n$ block of the enlarged matrix.

If extension is chosen as the edge-handling technique, let $A'$ denote the corresponding extension of $A$. If cropping is chosen as the edge-handling technique, then set $A' = A$. Applying the filter to $A$ with the chosen edge-handling technique is equivalent to applying the filter to $A'$ with cropping.

The simplest blur filter is the \textit{box blur}. Let $J_{m\times n}$ represent the $m\times n$ matrix with each entry equal to $1$, and abbreviate $J_{n\times n}$ by $J_n$.

\begin{definition}
The kernel matrix $B_{2r+1}$ for the box blur of radius $r$ is $(2r+1)^{-2} J_{2r+1}$.
\end{definition}
As a visual example, consider the following image.

\begin{center}
\includegraphics[scale=0.5]{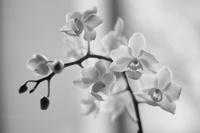}
\end{center}
\noindent
The results of applying box blurs with radii of 1, 2, and 3, respectively, to this image are shown below.

\begin{center}
\includegraphics[scale=0.5]{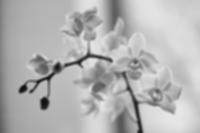}\qquad
\includegraphics[scale=0.5]{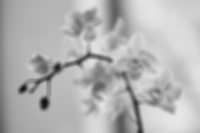}\qquad
\includegraphics[scale=0.5]{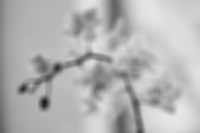}
\end{center}

One problem with box blurs, especially ones of large radius, is that pixels are weighted the same regardless of their distance from the central pixel. It makes sense to weight closer pixels more heavily than distant pixels: Pixels that are closer to each other will contain more information about each other than those that are farther away. Because of this, the \textit{Gaussian blur}, which takes this into account, is more commonly used. The values for the Gaussian blur kernel matrix are derived from the two-dimensional Gaussian curve
\[f(x,y)=\frac{1}{2\pi s^2}e^{-\frac{x^2+y^2}{2s^2}},\]
where $s$ represents the standard deviation of the distribution. Sometimes values are directly sampled from this function, but they are often approximated using binomial coefficients.

\begin{definition}
The $(2r+1)\times (2r+1)$ kernel matrix $G_{2r+1}$ of the approximate Gaussian blur with radius $r$ has entries, for each $-r \leq i,j \leq r$, of
\begin{equation}\label{eq:gaussian-blur-def}
(G_{2r+1})_{i,j} = \frac{1}{4^{2r}}\binom{2r}{i+r}\binom{2r}{j+r}.
\end{equation}
\end{definition}

\begin{example}
The kernel matrix for the $5\times 5$ approximate Gaussian blur is 
\[G_5 = \frac{1}{256}\begin{pmatrix}
1&4&6&4&1\\
4&16&24&16&4\\
6&24&36&24&6\\
4&16&24&16&4\\
1&4&6&4&1
\end{pmatrix}.\]
\end{example}

Notice that the pixels near the center are weighted highest, and that the values taper off toward the edges. Applying Gaussian blurs of radii 1, 2, and 3, respectively, to our example image from above results in the images below. The images appear smooth, while each individual element of the image remains clear.

\begin{center}
\includegraphics[scale=0.5]{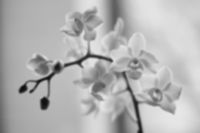}\qquad
\includegraphics[scale=0.5]{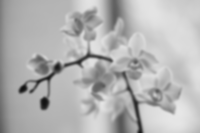}\qquad
\includegraphics[scale=0.5]{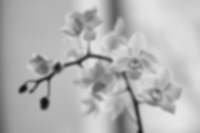}
\end{center}

Each Gaussian blur kernel matrix can be decomposed into the product of a row vector and a column vector. Since it is much faster to compute smaller convolutions than large ones, Gaussian blur algorithms break the computation into two smaller convolutions: one with the row vector, and one with the column vector.

In \cite{gaussian-blur}, Waltz and Miller develop an algorithm for computing Gaussian blur that is more efficient than simple decomposition. The key observation that the authors use is that Gaussian blurs of larger radius can be created through repeated convolution with Gaussian blurs of smaller radius. Their algorithm decomposes the Gaussian blur kernel matrix into a row vector and a column vector, and it decomposes each of these vectors into the repeated convolution of the matrices $\begin{pmatrix} 1 & 1 \end{pmatrix}$ and $\begin{pmatrix} 1 & 1 \end{pmatrix}^T$, respectively. With a bit of clever programming, Waltz and Miller created an algorithm that runs much faster than one that only uses the decomposition property.

Convolution by the matrices $\begin{pmatrix} 1 & 1 \end{pmatrix}$ and $\begin{pmatrix} 1 & 1 \end{pmatrix}^T$ corresponds to the operations $\rsigma$ and $\dsigma$, respectively. This observation leads to Definition \ref{dfn:collapsing sum}.

\begin{example}
Take the $2\times 2$ matrix
\[A=\begin{pmatrix} a_{1,1} & a_{1,2} \\ a_{2,1} & a_{2,2}\end{pmatrix}.\]
Applying the collapsing operations, we get 
\begin{gather*}
\dsigma(A) = \begin{pmatrix} a_{1,1} + a_{2,1} & a_{1,2} + a_{2,2}\end{pmatrix}
\qquad
\rsigma(A) = \begin{pmatrix} a_{1,2} + a_{2,2} \\ a_{2,1} + a_{2,2}\end{pmatrix}
\\ 
\sigma (A)= \begin{pmatrix} a_{1,1} + a_{1,2} + a_{2,1} + a_{2,2} \end{pmatrix}.
\end{gather*}
\end{example}


\section{Equivalence of Gaussian blur and collapsing sum}\label{sec:eq blur and sum}

In this section, we place the collapsing sum on a matrix-theoretic foundation and explicitly connect it with Gaussian blur via Theorem \ref{thm:gaussian blur}.

The following properties of the collapsing sum follow directly from Definition \ref{dfn:collapsing sum}.

\begin{proposition}\label{thm:sigma linearity}
Let $A$ and $B$ be $m\times n$ matrices and $c$ be any real number. Then
\begin{enumerate}
\item $\sigma (A+B)=\sigma (A) + \sigma (B)$,\vspace{-0.3em}
\item $\sigma(cA) = c\cdot\sigma (A)$,\vspace{-0.3em}
\item $\dsigma(A^T) = \rsigma(A)^T$, and\vspace{-0.3em}
\item $\sigma (A^T) = \sigma (A)^T$\vspace{-0.3em}
\end{enumerate}
whenever the operations are defined. The first two statements also hold for $\dsigma$ and $\rsigma$.
\end{proposition}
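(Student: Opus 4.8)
The plan is to verify each of the four identities directly from Definition~\ref{dfn:collapsing sum} by comparing entries. Since $\sigma = \dsigma \circ \rsigma = \rsigma \circ \dsigma$, and since the first two claims are asserted for $\dsigma$ and $\rsigma$ as well, I would first establish linearity (items 1 and 2) at the level of the single-direction operators and then transfer it to $\sigma$ by composition.

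First I would prove items 1 and 2 for $\dsigma$: for $m \times n$ matrices $A, B$ and a scalar $c$, the $(i,j)$ entry of $\dsigma(A+B)$ is $(a_{i,j}+b_{i,j}) + (a_{i+1,j}+b_{i+1,j})$, which regroups as $(a_{i,j}+a_{i+1,j}) + (b_{i,j}+b_{i+1,j}) = \dsigma(A)_{i,j} + \dsigma(B)_{i,j}$; similarly $\dsigma(cA)_{i,j} = ca_{i,j} + ca_{i+1,j} = c\,\dsigma(A)_{i,j}$. The argument for $\rsigma$ is identical with the column index shifted instead of the row index. Then, because $\sigma$ is a composition of $\dsigma$ and $\rsigma$, items 1 and 2 for $\sigma$ follow immediately: $\sigma(A+B) = \dsigma(\rsigma(A+B)) = \dsigma(\rsigma(A) + \rsigma(B)) = \dsigma(\rsigma(A)) + \dsigma(\rsigma(B)) = \sigma(A) + \sigma(B)$, and likewise for scalar multiplication.

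Next I would handle item 3. Writing $B = A^T$ so that $b_{i,j} = a_{j,i}$, the $(i,j)$ entry of $\dsigma(A^T)$ is $b_{i,j} + b_{i+1,j} = a_{j,i} + a_{j,i+1}$. On the other side, $\rsigma(A)_{j,i} = a_{j,i} + a_{j,i+1}$, so $\rsigma(A)^T$ has $(i,j)$ entry equal to $a_{j,i} + a_{j,i+1}$; the two agree, and one should also check the dimensions match (if $A$ is $m \times n$ then $A^T$ is $n \times m$, so $\dsigma(A^T)$ is $(n-1)\times m$, as is $\rsigma(A)^T$). Item 4 then follows from item 3 together with the dual identity $\rsigma(A^T) = \dsigma(A)^T$ (proved the same way): $\sigma(A^T) = \dsigma(\rsigma(A^T)) = \dsigma(\dsigma(A)^T) = \rsigma(\dsigma(A))^T = \sigma(A)^T$, using the other decomposition $\sigma = \rsigma \circ \dsigma$ in the last step.

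There is no real obstacle here — the entire proposition is a sequence of index-chasing verifications, and the only things requiring any care are (a) keeping the row/column index bookkeeping straight when transposing, and (b) confirming at each step that the relevant operations are defined, i.e.\ that the dimension being collapsed is at least $2$, which is exactly the hypothesis attached to each claim. The mildly delicate point is making sure the two expressions for $\sigma$ (namely $\dsigma \circ \rsigma$ and $\rsigma \circ \dsigma$) are deployed consistently so that item 4 reduces cleanly to item 3 and its dual.
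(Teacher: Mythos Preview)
Your proposal is correct and matches the paper's approach: the paper does not actually write out a proof but simply states that the properties ``follow directly from Definition~\ref{dfn:collapsing sum},'' which is exactly the entrywise verification you carry out. Your write-up is more detailed than anything the paper provides, but the underlying idea is the same.
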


Much of the investigation will examine repeated application of the collapsing sum. Let $A$ be an $m\times n$ matrix. Then $\sigma^0 (A) = A$, and for each positive integer $1\leq s<\min\{m,n\}$, we define $\sigma^s (A)=\sigma(\sigma^{s-1}(A))$. The operators $\dsigma^s$ and $\rsigma^s$ are defined similarly.

Let $I_m$ be the $m\times m$ identity matrix and $\delta_{i,j}$ be the Kronecker delta function
\[\delta_{i,j} = \begin{cases}
1 & \text{if } i = j \\
0 & \text{if } i \not= j.
\end{cases}\]

\begin{definition}
We denote by $R_m$ the $(m-1) \times m$ matrix with entries $r_{i,j}=\delta_{i,j}+\delta_{i+1,j}$. For a positive integer $k < m$, we define $\rdown{m}{k}$ as the product $R_{m-k+1}R_{m-k+2}\cdots R_m$. Further, let $\rdown{m}{0}=I_{m}$.
\end{definition}

The matrices $R_m$ have $1$'s on the diagonal and superdiagonal and $0$'s elsewhere. The notation $\rdown{m}{k}$ is defined analogously to the falling power notation $n^{\underline{k}} = n(n-1)\cdots (n-k+1)$.

\begin{example}
We have
\[
    R_4 = \begin{pmatrix}
    1 & 1 & 0 & 0 \\
    0 & 1 & 1 & 0 \\
    0 & 0 & 1 & 1
    \end{pmatrix}
\]
and
\[
    \rdown{4}{2} = 
    \begin{pmatrix}
    1 & 1 & 0 \\
    0 & 1 & 1
    \end{pmatrix}
    \begin{pmatrix}
    1 & 1 & 0 & 0 \\
    0 & 1 & 1 & 0 \\
    0 & 0 & 1 & 1
    \end{pmatrix}
    =
    \begin{pmatrix}
    1 & 2 & 1 & 0 \\
    0 & 1 & 2 & 1
    \end{pmatrix}.
\]
\end{example}

\begin{proposition}\label{matrix version single sum}
Let $A$ be an $m \times n$ matrix with $m,n\geq 2$. Then $\dsigma^s(A)=\rdown{m}{s} A$ and $\rsigma^s(A) = A (\rdown{n}{s})^T$.
\end{proposition}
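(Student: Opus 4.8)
The plan is to prove $\dsigma^s(A) = \rdown{m}{s} A$ by induction on $s$, and then obtain the statement for $\rsigma^s$ by transposing. The base case $s = 0$ is immediate, since $\rdown{m}{0} = I_m$ and $\dsigma^0(A) = A$. For the inductive step, the key is to establish the single-step identity $\dsigma(A) = R_m A$ whenever $A$ has $m \geq 2$ rows: the $(i,j)$ entry of $R_m A$ is $\sum_{k} r_{i,k} a_{k,j} = \sum_k (\delta_{i,k} + \delta_{i+1,k}) a_{k,j} = a_{i,j} + a_{i+1,j}$, which is exactly $\dsigma(A)_{i,j}$ by Definition \ref{dfn:collapsing sum}. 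This is a routine unwinding of the matrix product, so I would state it as a quick lemma or simply inline it.

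Given the single-step identity, the inductive step runs as follows. Assume $\dsigma^{s-1}(A) = \rdown{m}{s-1} A$; note that $\rdown{m}{s-1} A = R_{m-s+2} R_{m-s+3} \cdots R_m A$ is an $(m-s+1) \times n$ matrix. Applying $\dsigma$ once more and using the single-step identity (now with $m - s + 1$ rows in place of $m$), we get $\dsigma^s(A) = \dsigma(\dsigma^{s-1}(A)) = R_{m-s+1} \bigl(\rdown{m}{s-1} A\bigr) = \bigl(R_{m-s+1} R_{m-s+2} \cdots R_m\bigr) A = \rdown{m}{s} A$, where the last equality is just the definition of $\rdown{m}{s}$. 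One should check that the dimensions line up so every operation is defined: $\dsigma$ requires at least two rows, and $\rdown{m}{s-1}A$ has $m - s + 1 \geq 2$ rows precisely when $s \leq m - 1$, which holds since $s < \min\{m,n\} \leq m$.

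For the $\rsigma$ statement, I would invoke Proposition \ref{thm:sigma linearity}(3): since $\dsigma(B^T) = \rsigma(B)^T$, iterating gives $\dsigma^s(B^T) = \rsigma^s(B)^T$ for all valid $s$ (a trivial induction, or one can just cite that transposition intertwines the two operators). Taking $B = A$ an $m \times n$ matrix, so $A^T$ is $n \times m$, the already-proved row statement gives $\dsigma^s(A^T) = \rdown{n}{s} A^T$. Transposing both sides yields $\rsigma^s(A) = \bigl(\dsigma^s(A^T)\bigr)^T = \bigl(\rdown{n}{s} A^T\bigr)^T = A (\rdown{n}{s})^T$, as claimed.

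I do not anticipate a genuine obstacle here; the proposition is essentially a bookkeeping statement once the single-step identity $\dsigma(A) = R_m A$ is in hand. The only place requiring a little care is making sure the index shifts in the product $\rdown{m}{s} = R_{m-s+1} \cdots R_m$ match the shrinking row count at each stage of the induction, and confirming the hypothesis $s < \min\{m,n\}$ keeps all the $\dsigma$ applications well-defined.
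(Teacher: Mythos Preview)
Your proposal is correct and follows essentially the same approach as the paper: establish the single-step identity $\dsigma(A) = R_m A$ by direct computation of the $(i,j)$ entry, then induct. The only minor variation is that the paper handles $\rsigma^s$ by repeating the analogous direct computation (``the calculation for the second assertion is similar''), whereas you derive it from the $\dsigma^s$ case via the transpose relation of Proposition~\ref{thm:sigma linearity}(3); both routes are equally valid and equally short.
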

\begin{proof}
First note that $R_m A$ is an $(m-1) \times n$ matrix. Using the definition of $R_m$, the entry $(R_m A)_{i,j}$ is
\[
\sum_{k=1}^{m} (\delta_{i,k} + \delta_{i+1,k})a_{k,j}
=a_{i,j}+a_{i+1,j}
=\dsigma(A)_{i,j}.
\]
A quick induction argument shows that $\dsigma^s(A)=\rdown{m}{s} A$. The calculation for the second assertion is similar.
\end{proof}

Consequently, $\sigma^s(A)=(\rdown{m}{s})A(\rdown{n}{s})^T$.

\begin{proposition}\label{product of sum matrices}
Let $m$ be a positive integer and $s\leq m$ be a nonnegative integer. Then $\rdown{m}{s}$ is an $(m-s)\times m$ matrix with entries $(\rdown{m}{s})_{i,j}=\binom{s}{j-i}$.
\end{proposition}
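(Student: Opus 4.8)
The plan is to argue by induction on $s$, peeling off the leftmost factor of the product that defines $\rdown{m}{s}$. Throughout I adopt the usual convention that $\binom{s}{k}=0$ whenever $k<0$ or $k>s$, so that Pascal's identity $\binom{s}{k}=\binom{s-1}{k}+\binom{s-1}{k-1}$ holds for every integer $k$ and every $s\geq 1$; this is what will let me avoid fussing over boundary cases.

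For the base case $s=0$ we have $\rdown{m}{0}=I_m$ by definition, and $(I_m)_{i,j}=\delta_{i,j}=\binom{0}{j-i}$, as claimed. The dimension count is a short bookkeeping check: each $R_k$ has shape $(k-1)\times k$, so the factors in $\rdown{m}{s}=R_{m-s+1}R_{m-s+2}\cdots R_m$ are conformable in turn and the product has shape $(m-s)\times m$.

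For the inductive step, suppose the formula holds for $\rdown{m}{s-1}$, which is then the $(m-s+1)\times m$ matrix with entries $\binom{s-1}{j-i}$. Directly from the definition, $\rdown{m}{s}=R_{m-s+1}\,\rdown{m}{s-1}$ (since $m-(s-1)+1=m-s+2$), and because $(R_{m-s+1})_{i,k}=\delta_{i,k}+\delta_{i+1,k}$, carrying out the matrix product gives
\[
(\rdown{m}{s})_{i,j}=(\rdown{m}{s-1})_{i,j}+(\rdown{m}{s-1})_{i+1,j}=\binom{s-1}{j-i}+\binom{s-1}{j-i-1}=\binom{s}{j-i},
\]
the last equality being Pascal's identity. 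This closes the induction.

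I do not expect a genuine obstacle; the only point needing a little care is the behaviour at the edges of the matrices. In the displayed line the row index $i$ runs from $1$ to $m-s$, so $i+1$ runs up to $m-s+1$, which is exactly the number of rows of $\rdown{m}{s-1}$ — hence no term of the sum refers to a nonexistent entry — and the binomial-coefficient convention silently absorbs whatever would otherwise be boundary effects, so no case analysis is required. (One could also bypass the induction entirely by reading $(\rdown{m}{s})_{i,j}$ as the number of length-$s$ sequences of steps, each either leaving the index fixed or incrementing it by $1$, that send $i$ to $j$; there are exactly $\binom{s}{j-i}$ of these. But the induction above is the cleanest route, so that is what I would write up.)
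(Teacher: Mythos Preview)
Your proof is correct and follows essentially the same route as the paper: induction on $s$, peeling off the leftmost factor $R_{m-s+1}$ of the product, applying the Kronecker-delta description of its entries, and closing with Pascal's identity. The only differences are cosmetic (you go from $s-1$ to $s$ where the paper goes from $k$ to $k+1$, and you add an explicit remark about the row-index range and a combinatorial aside), so there is nothing to compare substantively.
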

\begin{proof}
We proceed by induction. For $s=0$, the theorem simplifies to the definition of $I_{m} = \rdown{m}{0}$. Now suppose that the theorem holds for some nonnegative integer $k$. Then $\rdown{m}{k+1}=R_{m-k}\rdown{m}{k}$. Since $\rdown{m}{k}$ is an $(m-k) \times m$ matrix, $\rdown{m}{k+1}$ is an $(m-k-1)\times m$ matrix. Further, by writing $(R_{m-k})_{i,r} = \delta_{i,r} + \delta_{i+1,r}$, we have 
\begin{align*}
(\rdown{m}{k+1})_{i,j}&=\sum_{r=1}^{m-k}(R_{m-k})_{i,r} (\rdown{m}{k})_{r,j}\\
&=\binom{k}{j-i}+\binom{k}{j-(i+1)}\\
&=\binom{k+1}{j-i},
\end{align*}
so the formula holds by induction.
\end{proof}

We now introduce an object that will facilitate the proof of Theorem \ref{thm:gaussian blur}.

\begin{definition}\label{def:coefficient matrix}
Let $a<m$ and $b < n$ be nonnegative integers. The \textit{coefficient matrix} $C_{m\times n}^{a, b} = (c_{i,j})$ is the unique $m\times n$ matrix such that $\sum_{i,j}\dsigma^a\rsigma^b(A)_{i,j}=\sum_{i,j} c_{i,j}a_{i,j}$ for all $m\times n$ matrices $A$. We abbreviate $C^{a,b}_n := C^{a,b}_{n\times n}$ and $C^a_{m\times n} := C^{a,a}_{m\times n}$.
\end{definition}

One interpretation of the coefficient matrix uses indeterminates. Let $X = (x_{i,j})$ be an $m\times n$ matrix of indeterminates; that is, the entries of $X$ are distinct symbols, not numbers. The entry $c_{i,j}$ of the coefficient matrix $C^{a,b}_{m\times n}$ is the sum of the coefficients of $x_{i,j}$ across all entries of $\dsigma^a\rsigma^b(X)$. Thus, one way to think of the coefficient matrix is that its $(i,j)$th entry represents the number of times that $x_{i,j}$ appears in $\dsigma^a\rsigma^b(X)$.

We now work to describe the entries of the coefficient matrices explicitly.

\begin{definition}
Let $A$ be an $m\times n$ matrix and $\mathbf{e}_n$ be the $n\times 1$ vector in which each entry is $1$. The \textit{column sum vector} of $A$ is $\alpha=A^T\mathbf{e}_m$, and the \textit{row sum vector} of $A$ is $\beta = A \mathbf{e}_n$. That is, $\alpha_j$ is the sum of the elements in the $j$th column of $A$, and $\beta_j$ is the sum of the elements of the $j$th row of $A$.
\end{definition}

\begin{lemma}\label{matrix product sum vectors}
Let $X=(x_{i,j})$ be a matrix of indeterminates and $A$ and $B$ be matrices such that the product $AXB$ is defined. If $\alpha$ is the column sum vector of $A$ and $\beta$ is the row sum vector of $B$, then the coefficient of $x_{p,q}$ in the formal expression $\sum_{i,j}(AXB)_{i,j}$ is $\alpha_p\beta_q$.
\end{lemma}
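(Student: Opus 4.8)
The plan is to expand $\sum_{i,j}(AXB)_{i,j}$ directly from the definition of matrix multiplication and then read off the coefficient of each indeterminate. Write $A$ as a $k\times m$ matrix, so that $X$ is $m\times n$, $B$ is $n\times\ell$, and $AXB$ is $k\times\ell$. Associativity of matrix multiplication (which holds verbatim for the formal matrix $X$, since its entries are just symbols in a polynomial ring) gives
\[
(AXB)_{i,j}=\sum_{p=1}^{m}\sum_{q=1}^{n}A_{i,p}\,x_{p,q}\,B_{q,j}.
\]

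Next I would sum this over all $i\in\{1,\dots,k\}$ and $j\in\{1,\dots,\ell\}$ and interchange the (finite) sums to obtain
\[
\sum_{i,j}(AXB)_{i,j}=\sum_{p,q}\Bigl(\sum_{i=1}^{k}A_{i,p}\Bigr)\Bigl(\sum_{j=1}^{\ell}B_{q,j}\Bigr)x_{p,q}.
\]
Because the $x_{p,q}$ are distinct indeterminates, the coefficient of $x_{p,q}$ in this expression is precisely $\bigl(\sum_i A_{i,p}\bigr)\bigl(\sum_j B_{q,j}\bigr)$. It then remains only to recognize the two factors: by definition $\alpha=A^{T}\mathbf{e}_k$, so $\alpha_p=\sum_{i}A_{i,p}$ is the sum of the $p$th column of $A$; similarly $\beta=B\mathbf{e}_\ell$ gives $\beta_q=\sum_{j}B_{q,j}$, the sum of the $q$th row of $B$. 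Hence the coefficient of $x_{p,q}$ equals $\alpha_p\beta_q$.

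I do not anticipate a genuine obstacle here; the argument is a one-line reindexing once the sums are written out. The only place to be careful is matching the contracted indices to the correct sum vector — the left matrix $A$ contributes its \emph{column} sums (we sum over the row index $i$ of $A$) while the right matrix $B$ contributes its \emph{row} sums (we sum over the column index $j$ of $B$) — and checking that the all-ones vectors appearing in the definitions of $\alpha$ and $\beta$ carry the dimensions forced by $X$ being $m\times n$.
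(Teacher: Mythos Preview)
Your proof is correct and follows essentially the same approach as the paper: expand $(AXB)_{i,j}$ via the definition of matrix multiplication, sum over $i$ and $j$, and read off the coefficient of $x_{p,q}$ as the product $\bigl(\sum_i A_{i,p}\bigr)\bigl(\sum_j B_{q,j}\bigr)=\alpha_p\beta_q$. Your version is a bit more explicit about the matrix dimensions and the interchange of finite sums, but the argument is the same one-line reindexing the paper gives.
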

\begin{proof}
Choose any indeterminate $x_{p,q}$. We have
\[\sum_{i,j}(AXB)_{i,j}=\sum_{i=1}^{m}\sum_{j=1}^n\left[\sum_{k=1}^m\sum_{r=1}^n a_{i,k}\cdot x_{k,r}\cdot b_{r,j}\right].\]
We obtain the coefficient of $x_{p,q}$ by summing only those terms where $k=p$ and $r=q$. This coefficient is thus
\[\sum_{i=1}^{m}\sum_{j=1}^n a_{i,p}b_{q,j}=\Bigg[\sum_{i=1}^m a_{i,p}\Bigg]\Bigg[\sum_{j=1}^n b_{q,j}\Bigg].\]
The left term in this product is $\alpha_p$, and the right term is $\beta_q$.
\end{proof}

\begin{proposition}\label{coefficient matrix sum vectors}
Let $\alpha$ be the column sum vector of $\rdown{m}{a}$ and $\beta$ be the column sum vector of $\rdown{n}{b}$. Then $C_{m\times n}^{a,b} = \alpha\beta^T$.
\end{proposition}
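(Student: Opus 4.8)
The plan is to reduce the statement to \Cref{matrix product sum vectors} by recognizing $\dsigma^a\rsigma^b(A)$ as a product of fixed matrices with a matrix of indeterminates sandwiched between them. By \Cref{matrix version single sum}, for any $m\times n$ matrix $A$ we have $\dsigma^a\rsigma^b(A) = (\rdown{m}{a})\,A\,(\rdown{n}{b})^T$. Replacing $A$ by the indeterminate matrix $X = (x_{i,j})$, this becomes $(\rdown{m}{a})\,X\,(\rdown{n}{b})^T$, which is exactly a triple product $AXB$ in the notation of \Cref{matrix product sum vectors} with $A = \rdown{m}{a}$ and $B = (\rdown{n}{b})^T$.

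First I would invoke \Cref{matrix product sum vectors} with these choices. Its hypothesis requires the column sum vector of the left factor and the row sum vector of the right factor. The column sum vector of $\rdown{m}{a}$ is $\alpha$ by definition. For the right factor $B = (\rdown{n}{b})^T$, the row sum vector of $B$ is $B\mathbf{e} = (\rdown{n}{b})^T \mathbf{e}$, which is precisely the column sum vector of $\rdown{n}{b}$, namely $\beta$. Hence \Cref{matrix product sum vectors} tells us that the coefficient of $x_{p,q}$ in $\sum_{i,j}\dsigma^a\rsigma^b(X)_{i,j}$ equals $\alpha_p\beta_q$.

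To finish, I would connect this to the defining property of the coefficient matrix. By \Cref{def:coefficient matrix}, $C^{a,b}_{m\times n} = (c_{i,j})$ is characterized by $\sum_{i,j}\dsigma^a\rsigma^b(A)_{i,j} = \sum_{i,j} c_{i,j} a_{i,j}$ for all real matrices $A$; equivalently, via the indeterminate interpretation discussed after the definition, $c_{p,q}$ is the coefficient of $x_{p,q}$ in the formal expansion of $\sum_{i,j}\dsigma^a\rsigma^b(X)_{i,j}$. The previous paragraph identifies that coefficient as $\alpha_p\beta_q = (\alpha\beta^T)_{p,q}$, so $C^{a,b}_{m\times n} = \alpha\beta^T$ entrywise, and uniqueness is immediate since two matrices inducing the same linear functional on all of $\mathbb{R}^{m\times n}$ must be equal.

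There is essentially no hard step here: the argument is a bookkeeping exercise in matching the abstract hypotheses of \Cref{matrix product sum vectors} to the concrete matrices $\rdown{m}{a}$ and $\rdown{n}{b}$. The only point demanding a moment's care is the transpose in the right factor — making sure that the \emph{row} sum vector of $(\rdown{n}{b})^T$ is the \emph{column} sum vector of $\rdown{n}{b}$, so that the symmetric-looking hypothesis of \Cref{coefficient matrix sum vectors} (both $\alpha$ and $\beta$ being \emph{column} sum vectors) comes out correctly.
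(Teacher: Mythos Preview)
Your proof is correct and follows essentially the same route as the paper: write $\dsigma^a\rsigma^b(X)=(\rdown{m}{a})X(\rdown{n}{b})^T$, apply Lemma~\ref{matrix product sum vectors}, observe that the row sum vector of $(\rdown{n}{b})^T$ is the column sum vector $\beta$ of $\rdown{n}{b}$, and read off $c_{p,q}=\alpha_p\beta_q$. The only difference is that you spell out the uniqueness and the transpose bookkeeping a bit more explicitly than the paper does.
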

\begin{proof}
Let $X$ be an $m\times n$ matrix of indeterminates. Apply Lemma \ref{matrix product sum vectors} to $\dsigma^a\rsigma^b(X)=(\rdown{m}{a})X(\rdown{n}{b})^T$. The row sum vector of $(\rdown{n}{b})^T$ is simply the column sum vector of $\rdown{n}{b}$. The sum in Lemma \ref{matrix product sum vectors} is the sum that defines the coefficient matrix, so the $(i,j)$th entry of the coefficient matrix $C^{a,b}_{m\times n}$ is $(\alpha\beta^T)_{i,j} = \alpha_i\beta_j$.
\end{proof}

Proposition \ref{coefficient matrix sum vectors} implicitly gives the following formula for coefficient matrices.

\begin{corollary}\label{intermediate collapse}
Let $m$ and $n$ be positive integers and $a < m$ and $b < n$ be nonnegative integers. The coefficient matrix $C_{m\times n}^{a,b}$ has entries $c_{i,j} = \big[\!\sum_{\ell=1}^{m-a}\binom{a}{i-\ell}\big] \! \big[\!\sum_{\ell=1}^{n-b}\binom{b}{j-\ell}\big]$.
\end{corollary}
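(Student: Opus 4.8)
The plan is to simply unwind the two preceding propositions, since the corollary is essentially a matter of substituting explicit entries. By Proposition \ref{coefficient matrix sum vectors}, $C_{m\times n}^{a,b} = \alpha\beta^T$, where $\alpha$ is the column sum vector of $\rdown{m}{a}$ and $\beta$ is the column sum vector of $\rdown{n}{b}$; thus $c_{i,j} = (\alpha\beta^T)_{i,j} = \alpha_i\beta_j$, and all that remains is to evaluate these column sums.

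First I would invoke Proposition \ref{product of sum matrices}, which says that $\rdown{m}{a}$ is an $(m-a)\times m$ matrix with $(\rdown{m}{a})_{\ell,j} = \binom{a}{j-\ell}$. Summing down the $j$th column therefore gives
\[
\alpha_j = \sum_{\ell=1}^{m-a}(\rdown{m}{a})_{\ell,j} = \sum_{\ell=1}^{m-a}\binom{a}{j-\ell},
\]
and the identical computation applied to $\rdown{n}{b}$ yields $\beta_j = \sum_{\ell=1}^{n-b}\binom{b}{j-\ell}$. Multiplying $\alpha_i$ by $\beta_j$ then produces exactly the stated formula for $c_{i,j}$.

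Since every ingredient has already been proved, there is no genuine obstacle here; the only point that needs a moment's attention is bookkeeping of dimensions — namely that $\rdown{m}{a}$ has $m-a$ rows (and $\rdown{n}{b}$ has $n-b$ rows), so that the column-sum indices run over $1\le\ell\le m-a$ and $1\le\ell\le n-b$ respectively, matching the ranges appearing in the corollary.
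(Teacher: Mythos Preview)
Your proof is correct and follows essentially the same route as the paper: invoke Proposition~\ref{coefficient matrix sum vectors} to write $c_{i,j}=\alpha_i\beta_j$, then use Proposition~\ref{product of sum matrices} to evaluate each column sum explicitly. The paper's proof is merely a terser version of what you wrote.
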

\begin{proof}
Proposition \ref{product of sum matrices} shows that
\[ \alpha_{i} = \sum_{\ell=1}^{m-s} (\rdown{m}{a})_{\ell,i} = \sum_{\ell=1}^{m-a}\binom{a}{i-\ell}. \]
A similar calculation holds for $\beta_j$.
\end{proof}

\begin{corollary}\label{square collapsed}
Let $A$ be an $m\times n$ matrix. The value of the single entry of the matrix $\dsigma^{m-1}\rsigma^{n-1} (A)$ is $\sum_{i=1}^m\sum_{j=1}^n \binom{m-1}{i-1}\binom{n-1}{j-1} a_{i,j}$.
\end{corollary}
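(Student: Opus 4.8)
The plan is to recognize Corollary \ref{square collapsed} as the extreme special case of the coefficient matrix machinery already in place. First I would check the dimensions: applying $\dsigma^{m-1}$ collapses the rows from $m$ down to $1$, and applying $\rsigma^{n-1}$ collapses the columns from $n$ down to $1$, so $\dsigma^{m-1}\rsigma^{n-1}(A)$ is a $1\times 1$ matrix. Hence the sum $\sum_{i,j}\dsigma^{m-1}\rsigma^{n-1}(A)_{i,j}$ is literally the single entry whose value we want.

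Next I would invoke Definition \ref{def:coefficient matrix} with $a = m-1$ and $b = n-1$, which is legitimate since $m-1 < m$ and $n-1 < n$. This identifies the single entry as $\sum_{i,j} c_{i,j}\, a_{i,j}$, where $(c_{i,j}) = C_{m\times n}^{m-1,\,n-1}$.

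Finally I would apply Corollary \ref{intermediate collapse} with $a = m-1$ and $b = n-1$. Because $m - a = 1$ and $n - b = 1$, each of the two bracketed sums in the formula $c_{i,j} = \big[\sum_{\ell=1}^{m-a}\binom{a}{i-\ell}\big]\big[\sum_{\ell=1}^{n-b}\binom{b}{j-\ell}\big]$ reduces to its single $\ell = 1$ term, giving $c_{i,j} = \binom{m-1}{i-1}\binom{n-1}{j-1}$. Substituting this into $\sum_{i,j} c_{i,j}\, a_{i,j}$ yields the claimed expression. I do not anticipate any real obstacle: the substance of the argument was already carried out in Proposition \ref{coefficient matrix sum vectors} and Corollary \ref{intermediate collapse}, and the only point requiring a moment's attention is the dimension count that justifies interpreting the sum as a single entry.
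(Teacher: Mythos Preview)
Your proposal is correct and essentially identical to the paper's own proof: both observe that the $1\times 1$ output makes the coefficient-matrix sum equal to the single entry, then invoke Corollary \ref{intermediate collapse} with $a=m-1$, $b=n-1$ to read off $c_{i,j}=\binom{m-1}{i-1}\binom{n-1}{j-1}$. Your write-up is slightly more explicit about why the bracketed sums collapse to a single term, but the argument is the same.
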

\begin{proof}
Let $C_{m\times n}^{m-1,n-1} = (c_{i,j})$. Since $\dsigma^{m-1}\rsigma^{n-1} (A)$ has a single entry, we have
\[ \sigma^{n-1}(A)_{1,1} = \sum_{i,j}\sigma^{n-1}(A)_{i,j} = \sum_{i,j} c_{i,j}a_{i,j} \]
by the definition of the coefficient matrix. Corollary \ref{intermediate collapse} shows that $c_{i,j} = \binom{m-1}{i-1}\binom{n-1}{j-1}$.
\end{proof}

The entries of $\sigma^s(A)$ are determined by the blocks of $A$ of size $(s+1)\times(s+1)$. From this observation, Corollary \ref{square collapsed} can be used to find the value of any entry in $\sigma^s(A)$: simply apply the corollary to the submatrix $(a_{p+i,q+j})_{i,j=0}^s$ to determine the value of $\sigma^s(A)_{i,j}$.

Recall that to apply a kernel matrix, we need to specify an edge-handling technique, wherein we extend the matrix $A$ to a matrix $A'$. Then applying the filter to $A$ with the edge-handling technique is equivalent (by definition) to applying the filter to $A'$ with cropping.

\begin{theorem}\label{thm:gaussian blur}
Suppose a matrix $A$ and an edge-handling technique yielding the extension $A'$ of $A$ are given. Then $G_{2r+1} \ast A = 4^{-2r}\sigma^{2r}(A')$ for all nonnegative integers $r$.
\end{theorem}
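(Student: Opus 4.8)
The plan is to prove the identity entry by entry: for each index pair $(p,q)$ at which both sides are defined, I will show $(G_{2r+1}\ast A)_{p,q} = 4^{-2r}\,\sigma^{2r}(A')_{p,q}$. I work throughout with $A'$ rather than $A$: by the paragraph preceding the theorem, applying $G_{2r+1}$ to $A$ with the chosen edge-handling technique is, by definition, applying $G_{2r+1}$ to $A'$ with cropping, so every entry of $G_{2r+1}\ast A$ is the raw convolution sum evaluated at a pixel of $A'$ where all the required entries of $A'$ exist.

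For the left-hand side, fix $(p,q)$ and write $(G_{2r+1}\ast A)_{p,q}=\sum_{i,j=-r}^{r}(G_{2r+1})_{i,j}\,a'_{(p+r)-i,\,(q+r)-j}$, where the shift by $r$ records the position of the cropped output pixel inside $A'$; substituting $(G_{2r+1})_{i,j}=4^{-2r}\binom{2r}{i+r}\binom{2r}{j+r}$ and reindexing by $u=r-i$, $v=r-j$ turns this into $4^{-2r}\sum_{u,v=0}^{2r}\binom{2r}{u}\binom{2r}{v}\,a'_{p+u,\,q+v}$. Here I use $i+r=2r-u$ together with the symmetry $\binom{2r}{2r-u}=\binom{2r}{u}$; this is exactly what makes the reflection between the convolution's $p-i$ and the block's $p+u$ harmless.

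For the right-hand side, I invoke the remark following Corollary \ref{square collapsed}: the entry $\sigma^{2r}(A')_{p,q}$ is the single entry obtained by fully collapsing the $(2r+1)\times(2r+1)$ submatrix $(a'_{p+u,\,q+v})_{u,v=0}^{2r}$ of $A'$. Applying Corollary \ref{square collapsed} to this submatrix with $m=n=2r+1$ and shifting its $1$-based indices down by one gives $\sigma^{2r}(A')_{p,q}=\sum_{u,v=0}^{2r}\binom{2r}{u}\binom{2r}{v}\,a'_{p+u,\,q+v}$, which is $4^{2r}$ times the quantity found for the left-hand side; comparing the two completes the proof. (The case $r=0$ is the degenerate instance in which the reindexing is empty and $\sigma^{0}$ is the identity, so no separate base case is required.)

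The one place demanding care is the index bookkeeping. I expect the main obstacle to be verifying that the convolution sum --- written with centered kernel indices ranging over $[-r,r]$ and anchored at the output pixel --- refers to the very same $(2r+1)\times(2r+1)$ block of $A'$, namely rows $p,\dots,p+2r$ and columns $q,\dots,q+2r$, that the collapsing-sum remark associates with $\sigma^{2r}(A')_{p,q}$, and that the two output indexings line up across both edge-handling cases (extension, where the result is $m\times n$, and cropping, where it is $(m-2r)\times(n-2r)$). Once the blocks are identified and the binomial-symmetry reindexing is carried out, the equality is immediate.
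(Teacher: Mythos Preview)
Your proposal is correct and follows essentially the same approach as the paper: both arguments compute $(G_{2r+1}\ast A)_{p,q}$ and $\sigma^{2r}(A')_{p,q}$ by expanding each over the same $(2r+1)\times(2r+1)$ block of $A'$, invoke Corollary~\ref{square collapsed} on that submatrix, and compare the resulting double sums. Your version is somewhat more explicit about the index bookkeeping (the shift by $r$ and the reindexing $u=r-i$, $v=r-j$ together with the symmetry $\binom{2r}{2r-u}=\binom{2r}{u}$), whereas the paper silently uses the symmetry of the kernel to write $a'_{p+i,q+j}$ in place of $a'_{p-i,q-j}$; but the substance is the same.
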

\begin{proof}
Each entry of $G_{2r+1} \ast A$ corresponds to a block of $A'$ of size $(2r+1)\times (2r+1)$. From equation (\ref{eq:gaussian-blur-def}), the value of the entry $(G_{2r+1}\ast A)_{p,q}$ is
\[ \frac{1}{4^{2r}} \sum_{i=-r}^r\,\sum_{j=-r}^r\binom{2r}{i+r}\binom{2r}{j+r}a'_{p+i,q+i}. \]

On the other hand, let $B := (a'_{p+i,q+j})_{i,j=-r}^r$ be a submatrix of $A'$. Applying Corollary \ref{square collapsed} and then (\ref{eq:gaussian-blur-def}) gives
\begin{align*}
\sigma^{2r}(A')_{p,q} &= \sigma^{2r}(B)_{1,1}\\
&= \sum_{i=0}^{2r}\,\sum_{j=0}^{2r} \binom{2r}{i}\binom{2r}{j} b_{i,j}\\
&= \sum_{i=-r}^r\,\sum_{j=-r}^r\binom{2r}{i+r}\binom{2r}{j+r}a'_{p+i,q+i}\\
&= 4^{2r} (G_{2r+1}\ast A)_{p,q}. \qedhere
\end{align*}
\end{proof}

Theorem \ref{thm:gaussian blur} may be equivalently stated as an equality of operators:
\[ G_{2r+1} = 4^{-2r}\sigma^{2r}. \]\vspace{-2.2em}


\section{Further properties of the collapsing sum}\label{sec:further properties}

\subsection{Special classes of matrices}

\begin{definition}
A \textit{Toeplitz matrix} is an $m\times n$ matrix $A$ with the property that $a_{i,j} = a_{k,\ell}$ whenever $i-j = k-\ell$. We denote by $\toep(a_{-n+1},\dots,a_{m-1})$ the $m\times n$ Toeplitz matrix $A$ with entries $a_{i,j} = a_{i-j}$.
\end{definition}

\begin{example}
The general $4 \times 5$ Toeplitz matrix $\toep(a_{-4},\dots,a_{3})$ is
\[ \begin{pmatrix}
a_0 & a_{-1} & a_{-2} & a_{-3} & a_{-4}\\
a_{1} & a_0 & a_{-1} & a_{-2} & a_{-3}\\
a_{2} & a_{1} & a_0 & a_{-1} & a_{-2}\\
a_{3} & a_{2} & a_{1} & a_0 & a_{-1}
\end{pmatrix}. \]
\end{example}
Toeplitz matrices have applications in a wide variety of pure and applied areas, including representation theory, signal processing, differential and integral equations, and quantum mechanics. Moreover, every $n\times n$ matrix can be decomposed as the product of at most $2n+5$ Toeplitz matrices \cite{toeplitz-decomp}.

In what follows, we use $(m+1)\times (n+1)$ Toeplitz matrices $\toep(a_{-n},\dots, a_m)$ to slightly simplify the statements of the results.

\begin{proposition}\label{thm:circulant-fully-collapsed}
Let $A = \toep(a_{-n},\dots,a_{m})$ be an $(m+1)\times (n+1)$ Toeplitz matrix. Then $\sum_{k=-n}^{m} \binom{m+n}{n+k}a_k$ is the single entry of $\dsigma^{m}\rsigma^{n}(A)$.
\end{proposition}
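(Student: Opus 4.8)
The plan is to read off the single entry from Corollary \ref{square collapsed}, substitute the Toeplitz relation, and then collapse the resulting double sum into a single sum by means of Vandermonde's identity.

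First, I would observe that $A$ is an $(m+1)\times(n+1)$ matrix, so Corollary \ref{square collapsed} (applied with $m+1$ and $n+1$ in the roles of $m$ and $n$) says that the single entry of $\dsigma^{m}\rsigma^{n}(A)$ equals
\[
\sum_{i=1}^{m+1}\sum_{j=1}^{n+1}\binom{m}{i-1}\binom{n}{j-1}a_{i,j}.
\]
Then I would use the Toeplitz hypothesis $a_{i,j}=a_{i-j}$ and reindex by $p=i-1$ and $q=j-1$, which turns the expression into $\sum_{p=0}^{m}\sum_{q=0}^{n}\binom{m}{p}\binom{n}{q}a_{p-q}$.

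Next I would group the terms by the common value $k=p-q$, which ranges over $-n\le k\le m$. The coefficient of $a_k$ is then $\sum_{q}\binom{m}{q+k}\binom{n}{q}$, where $q$ may be taken over all integers since both binomial coefficients vanish outside the relevant range. Rewriting $\binom{n}{q}=\binom{n}{n-q}$ and substituting $j=q+k$ brings this to the form $\sum_{j}\binom{m}{j}\binom{n}{(n+k)-j}$, which Vandermonde's identity evaluates to $\binom{m+n}{n+k}$. Summing over $k$ yields exactly $\sum_{k=-n}^{m}\binom{m+n}{n+k}a_k$, as claimed.

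The argument is entirely elementary once Corollary \ref{square collapsed} is in hand; the only point requiring care is the index bookkeeping — verifying that the diagonal parameter $k$ runs precisely over $-n,\dots,m$, and that the inner sum defining the coefficient of $a_k$ can be harmlessly extended to all integers before Vandermonde's identity is applied. I do not expect any genuine obstacle beyond this.
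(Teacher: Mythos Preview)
Your argument is correct and follows essentially the same route as the paper: apply Corollary \ref{square collapsed} to the $(m+1)\times(n+1)$ matrix, isolate the contribution along each diagonal $k=i-j$, and evaluate the resulting inner sum by Vandermonde's identity. The paper organizes this slightly differently---it first decomposes $A$ as a sum of single-stripe Toeplitz matrices and invokes linearity before applying the corollary---but the core computation and the use of Vandermonde are identical.
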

\begin{proof}
We can decompose the Toeplitz matrix into ``stripes'':
\[ A = \sum_{k=-n}^{m} \toep(0,\dots,0,a_k,0,\dots,0). \]
Since by Proposition \ref{thm:sigma linearity} the collapsing sum distributes over addition, we need only consider the case when one $a_k$ is nonzero. Moreover, since $\sigma(cA) = c\sigma(A)$, we can restrict to $a_k=1$.

Therefore, suppose $ A = \toep(0,\dots,0,1,0,\dots,0)$, so that
\[ a_{i,j} = \begin{cases}
1 & \text{if } i-j = k\\
0 &\text{otherwise.}
\end{cases} \]
We use the convention that $\binom{n}{r}=0$ if $r < 0$ or $r > n$. Applying Corollary \ref{square collapsed} and the binomial symmetry $\binom{n}{r} = \binom{n}{n-r}$ gives
\begin{align*}
\dsigma^{m}\rsigma^{n}(A)_{1,1} &= \sum_{i=0}^{m}\, \sum_{j=0}^{n} \binom{m}{i}\binom{n}{j}a_{i+1,j+1}\\
&= \sum_{i=0}^{m} \binom{m}{i}\binom{n}{i-k}\\
&=\sum_{i=0}^{m} \binom{m}{i}\binom{n}{(n+k)-i}.
\end{align*}
Applying the well-known identity
$ \sum_{i=0}^{m} \binom{m}{i}\binom{n}{r-i} = \binom{m+n}{r} $
finishes the proof.
\end{proof}

A direct application of Proposition \ref{thm:circulant-fully-collapsed} yields the following.

\begin{corollary}
The single entry in $\sigma^n(I_{n+1})$ is the central binomial coefficient $\binom{2n}{n}$.
\end{corollary}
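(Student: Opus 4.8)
The plan is to recognize $I_{n+1}$ as a Toeplitz matrix and apply Proposition \ref{thm:circulant-fully-collapsed} directly. Write $I_{n+1} = \toep(a_{-n}, \dots, a_n)$, where $a_k = \delta_{k,0}$; that is, $a_0 = 1$ and $a_k = 0$ for all $k \neq 0$. This matches the hypothesis of Proposition \ref{thm:circulant-fully-collapsed} with $m = n$, so the single entry of $\dsigma^n \rsigma^n(I_{n+1})$ equals $\sum_{k=-n}^{n} \binom{2n}{n+k} a_k$.

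Since the operators $\dsigma$ and $\rsigma$ commute (Definition \ref{dfn:collapsing sum}), we have $\sigma^n = \dsigma^n \circ \rsigma^n$, so $\dsigma^n \rsigma^n(I_{n+1})$ is exactly $\sigma^n(I_{n+1})$. Only the $k = 0$ term in the sum survives, leaving $\binom{2n}{n+0} \cdot 1 = \binom{2n}{n}$.

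There is no real obstacle here: the only thing to check is the bookkeeping that $I_{n+1}$ is the stripe $\toep(0,\dots,0,1,0,\dots,0)$ with the $1$ on the main diagonal, which is immediate from the indexing convention $a_{i,j} = a_{i-j}$. Everything else is a one-line substitution into the already-established proposition.
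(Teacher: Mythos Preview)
Your proof is correct and is exactly the direct application of Proposition~\ref{thm:circulant-fully-collapsed} that the paper intends: recognize $I_{n+1}=\toep(0,\dots,0,1,0,\dots,0)$ with $m=n$ and read off the single surviving term $\binom{2n}{n}$. The only extra remark you add---that $\sigma^n=\dsigma^n\rsigma^n$ by commutativity---is implicit in the paper's conventions, so nothing is missing.
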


A similar result holds for coefficient matrices.

\begin{proposition}
The single entry in $\sigma^n(C_{n+1}^{n})$ is $\binom{2n}{n}^2$.
\end{proposition}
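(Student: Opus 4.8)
The plan is to reduce everything to Corollary~\ref{square collapsed} together with the Chu--Vandermonde identity $\sum_{k=0}^n\binom{n}{k}^2 = \binom{2n}{n}$. First I would write down the coefficient matrix $C_{n+1}^n = C_{(n+1)\times(n+1)}^{n,n}$ explicitly. Applying Corollary~\ref{intermediate collapse} with $m = n+1$ and $a = b = n$, each inner sum $\sum_{\ell=1}^{(n+1)-n}$ has a single term ($\ell = 1$), so the $(i,j)$th entry of $C_{n+1}^n$ is $\binom{n}{i-1}\binom{n}{j-1}$. In particular $C_{n+1}^n = vv^T$, where $v$ is the column vector with $v_i = \binom{n}{i-1}$; it is this rank-one structure that will make the final double sum separate cleanly.

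Next I would apply Corollary~\ref{square collapsed} to the $(n+1)\times(n+1)$ matrix $A = C_{n+1}^n$. Since $\sigma$ lowers both dimensions by one, $\sigma^n(A) = \dsigma^n\rsigma^n(A)$ has a single entry, and the corollary gives
\[ \sigma^n(C_{n+1}^n)_{1,1} = \sum_{i=1}^{n+1}\sum_{j=1}^{n+1}\binom{n}{i-1}\binom{n}{j-1}\,(C_{n+1}^n)_{i,j} = \sum_{i=1}^{n+1}\sum_{j=1}^{n+1}\binom{n}{i-1}^2\binom{n}{j-1}^2. \]
The double sum factors as $\big(\sum_{i=1}^{n+1}\binom{n}{i-1}^2\big)\big(\sum_{j=1}^{n+1}\binom{n}{j-1}^2\big)$, which after the index shift $k = i-1$ (resp. $k = j-1$) equals $\big(\sum_{k=0}^{n}\binom{n}{k}^2\big)^2$.

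Finally, the identity $\sum_{i=0}^{m}\binom{m}{i}\binom{n}{r-i} = \binom{m+n}{r}$ already invoked in the proof of Proposition~\ref{thm:circulant-fully-collapsed}, taken with $m = n$ and $r = n$ and using $\binom{n}{n-k} = \binom{n}{k}$, yields $\sum_{k=0}^n\binom{n}{k}^2 = \binom{2n}{n}$, so the single entry equals $\binom{2n}{n}^2$. I do not expect any real obstacle here: the whole argument is bookkeeping on top of Corollaries~\ref{intermediate collapse} and~\ref{square collapsed}, and the only place to be careful is the $i \mapsto i-1$ index shift relating the $1$-based matrix indexing to the binomial coefficients.
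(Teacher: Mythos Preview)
Your proposal is correct and follows essentially the same route as the paper: apply Corollary~\ref{square collapsed} to $C_{n+1}^n$, use that its entries are $\binom{n}{i-1}\binom{n}{j-1}$, factor the resulting double sum, and invoke $\sum_k\binom{n}{k}^2=\binom{2n}{n}$. The only difference is that you spell out the computation of $(C_{n+1}^n)_{i,j}$ via Corollary~\ref{intermediate collapse} and track the index shifts more carefully, whereas the paper treats those as understood.
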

\begin{proof}
From Corollary \ref{square collapsed}, we have
\[
\sigma^n(C_{n+1}^{n})
= \sum_{i=0}^n \, \sum_{j=0}^n \binom{n}{i}\binom{n}{j}c_{i,j}
= \sum_{i=0}^n \, \sum_{i=0}^n \binom{n}{i}^2\binom{n}{j}^2.
\]
Using the identity $\sum_{k=0}^n \binom{n}{k}^2 = \binom{2n}{n}$ completes the proof.
\end{proof}

Recall that $J_{m\times n}$ denotes the $m\times n$ matrix with each entry equal to $1$.

\begin{proposition}\label{thm:sum-entries-coeff-matrix}
Let $a < m$ and $b < n$ be nonnegative integers. Then $\sum_{i,j}(C_{m\times n}^{a,b})_{i,j}=2^{a+b}(m-a)(n-b)$.
\end{proposition}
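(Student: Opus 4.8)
The plan is to exploit the defining property of the coefficient matrix by feeding it a single, maximally convenient test matrix. By Definition \ref{def:coefficient matrix}, the identity $\sum_{i,j}\dsigma^a\rsigma^b(A)_{i,j}=\sum_{i,j}c_{i,j}a_{i,j}$ holds for \emph{every} $m\times n$ matrix $A$, where $(c_{i,j})=C^{a,b}_{m\times n}$. Taking $A=J_{m\times n}$, the all-ones matrix, collapses the right-hand side to exactly $\sum_{i,j}(C^{a,b}_{m\times n})_{i,j}$, which is the quantity we want. So it remains only to evaluate the left-hand side, $\sum_{i,j}\dsigma^a\rsigma^b(J_{m\times n})_{i,j}$.

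For that I would record the elementary observation that the collapsing operations send constant matrices to constant matrices: if every entry of a matrix equals $c$ and the relevant dimension is at least $2$, then every entry of $\dsigma$ (resp. $\rsigma$) of it equals $2c$, by the defining formula $\dsigma(A)_{i,j}=a_{i,j}+a_{i+1,j}$. Applying this $a$ times with $\dsigma$ and $b$ times with $\rsigma$ — all of which are defined precisely because $a<m$ and $b<n$ — yields $\dsigma^a\rsigma^b(J_{m\times n})=2^{a+b}J_{(m-a)\times(n-b)}$. This matrix has $(m-a)(n-b)$ entries, each equal to $2^{a+b}$, so the sum of its entries is $2^{a+b}(m-a)(n-b)$, which completes the argument.

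There is essentially no hard step here; the only thing to be careful about is the dimension bookkeeping (checking that none of the intermediate matrices degenerates before all $a+b$ operations are performed, which is guaranteed by the hypotheses $a<m$, $b<n$), and noting that the order in which the $\dsigma$'s and $\rsigma$'s are applied is irrelevant since they commute. As an alternative route one could instead invoke Proposition \ref{coefficient matrix sum vectors}, writing $C^{a,b}_{m\times n}=\alpha\beta^T$ so that $\sum_{i,j}(C^{a,b}_{m\times n})_{i,j}=\big(\sum_i\alpha_i\big)\big(\sum_j\beta_j\big)$, and then using Proposition \ref{product of sum matrices} to see that each of the $m-a$ rows of $\rdown{m}{a}$ sums to $\sum_{k=0}^a\binom{a}{k}=2^a$ (the full range of binomial coefficients appearing because $a<m$), giving $\sum_i\alpha_i=2^a(m-a)$ and similarly $\sum_j\beta_j=2^b(n-b)$; multiplying gives the same result. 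I would present the first proof, as it is shortest and most transparent.
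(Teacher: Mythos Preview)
Your proof is correct and shares its central idea with the paper's: both plug $A=J_{m\times n}$ into the defining identity for the coefficient matrix, reducing the problem to summing the entries of $\dsigma^a\rsigma^b(J_{m\times n})$. The only difference is in how that sum is evaluated. You observe directly that each application of $\dsigma$ or $\rsigma$ to a constant matrix doubles the constant, so $\dsigma^a\rsigma^b(J_{m\times n})=2^{a+b}J_{(m-a)\times(n-b)}$. The paper instead notes that every $(a{+}1)\times(b{+}1)$ block of $J_{m\times n}$ is $J_{(a+1)\times(b+1)}$, invokes the Toeplitz result (Proposition~\ref{thm:circulant-fully-collapsed}) to get $\dsigma^a\rsigma^b(J_{(a+1)\times(b+1)})_{1,1}=\sum_{i=0}^{a+b}\binom{a+b}{i}=2^{a+b}$, and then counts entries. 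Your route is more self-contained and avoids the Toeplitz machinery; the paper's route illustrates Proposition~\ref{thm:circulant-fully-collapsed} in action but is otherwise no shorter. Your alternative via $C^{a,b}_{m\times n}=\alpha\beta^T$ is also valid and gives a third independent derivation.
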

\begin{proof}
It follows from Definition \ref{def:coefficient matrix} that the sum of the entries of $C^{a,b}_{m\times n}$ is equal to the sum of the entries of $\dsigma^a\rsigma^b(J_{m\times n})$. Each $(a+1) \times (b+1)$ block of $J_{m\times n}$ is $J_{(a+1)\times (b+1)}$, so $\dsigma^a\rsigma^b(J_{m\times n})_{i,j} = \dsigma^a\rsigma^b(J_{(a+1)\times(b+1)})_{1,1}$ for all $1 \leq i \leq m-a$ and $1 \leq j \leq n-b$. Since $J_{(a+1)\times (b+1)}$ is a Toeplitz matrix, Proposition \ref{thm:circulant-fully-collapsed} gives
\[ \dsigma^a\rsigma^b(J_{(a+1)\times(b+1)})_{1,1} = \sum_{i=0}^{a+b} \binom{a+b}{i} = 2^{a+b}. \]
Noting that $\dsigma^a\rsigma^b(J_{m\times n})$ has $(m-a)(n-b)$ entries completes the proof.
\end{proof}


The proof of Corollary \ref{square collapsed} shows that the Gaussian blur kernel $G_{2r+1}$ is proportional to the coefficient matrix $C_{2r+1}^{2r}$. Similarly, the box blur kernel $B_{2r+1}$ is proportional to the coefficient matrix $C_{2r+1}^0$. The constant of proportionality in both cases is $(\sum_{i,j}(C^s_{2r+1})_{i,j})^{-1}$, where $s=2r$ or $s=0$, respectively. Thus, the coefficient matrices are a generalization that unite these two filters. That is, the expression $(2^{a+b}(m-a)(n-b))^{-1}C^{a,b}_{m\times n}$ provides an interpolation between box blur and Gaussian blur.


\subsection{Further connections with Gaussian blur}

Waltz and Miller \cite{gaussian-blur} extend their techniques to non-square blurs, that is, Gaussian-like blurs using non-square kernel matrices. These can be defined in parallel to the square Gaussian blurs. If $G_{a\times b}$ denotes the kernel for the $a\times b$ Gaussian blur, then
\[(G_{a\times b})_{i,j} = 2^{-(a+b-2)}\binom{a-1}{i-1}\binom{b-1}{j-1}.\]
Since either $a$ or $b$ might be even, there may be no central element, so here we index from $(1,1)$ in the top left corner of the matrix.

The kernel matrix $G_{a\times b}$ is proportional to the coefficient matrix for a fully collapsed $a\times b$ matrix. This extends Theorem \ref{thm:gaussian blur}, since convolving $G_{a\times b}$ with a matrix $A$ is equivalent to applying $2^{-(a+b-2)}\dsigma^{a-1}\rsigma^{b-1}$ to the extended matrix $A'$.

The authors also venture into higher dimensions and discuss higher-dimensional blurs. We can easily transfer this idea to the language of the collapsing sum. Suppose we want to collapse (or, equivalently, blur) an $n$-dimensional array. We can define $\sigma_{\vec{\imath}}$, for $1\leq i\leq n$, to be the operator that ``collapses'' the array in the $i$th direction, akin to the effects of $\dsigma$ and $\rsigma$ in two dimensions. Define $\sigma_n := \sigma_{\vec{1}}\cdots\sigma_{\vec{n}}$. Then powers of $2^{-n}\sigma_n$ give the higher-dimensional blur that Waltz and Miler describe. As before, general rectangular blurs are obtained by simply composing the operators $\frac{1}{2}\sigma_{\vec{\imath}}$ for various values of $i$.


\subsection{A generalized collapsing sum}\label{sec:generalized-sum}

Waltz and Miller's algorithm for Gaussian blur may be extended to an operator that returns weighted sums of entries.

\begin{definition}\label{generalized sigma def}
Let $\gamma$ be an $b_1\times b_2$ matrix and $A$ be an $m\times n$ matrix with $m,n\geq \max\{b_1,b_2\}$. Then $\sigma_{\gamma}(A)$ is an $(m-b_1)\times (n-b_2)$ matrix with
\[ \sigma_{\gamma}(A)_{p,q}= \sum_{i=0}^{b_1-1}\sum_{j=0}^{b_2-1} \gamma_{i+1,j+1}a_{p+i,q+j}. \]
\end{definition}

If $\gamma = \left(\begin{smallmatrix}1&1\\1&1\end{smallmatrix}\right)$, then we recover the original collapsing sum. Moreover, if $\gamma = (1\, 1)$, then $\sigma_\gamma = \rsigma$, and $\sigma_{\gamma^T} = \dsigma$.

For any matrix $\gamma$ of rank $1$, there exist two column vectors $\rho$ and $\phi$ such that $\gamma=\rho\phi^T$. Waltz and Miller's algorithm may be easily adapted for any $2\times 2$ rank-1 matrix. Our previous results on the collapsing sum may also be extended to $\sigma_\gamma$ for any (not necessarily square) matrix $\gamma$ of rank $1$.

\begin{definition}
Let $\phi$ be a column vector with $k$ entries. Let $\phi$ be a column vector with $k$ entries. The $(m-k+1)\times m$ matrix $R^\phi_m$ has entries $(R^\phi_m)_{p,q}=\sum_{i=0}^{k-1}\phi_{i+1}\delta_{p+i,q}$.
\end{definition}

Again, notice that if $\phi=(1\, 1)$, then $R^\phi_m=R_m$. We define the falling powers of these matrices analogously to those of $R_m$. A generalized form of Proposition \ref{coefficient matrix sum vectors} holds in that, for any $m\times n$ matrix $A$,
\[\sigma_{\rho}^a\sigma_{\phi^T}^b(A) = (R_m^\rho)^{\underline{a}} \, A \, [(R_n^{\phi^T})^{\underline{b}}]^T.\]
In particular, if $\gamma=\rho\phi^T$, then
\[ \sigma_\gamma^s(A) = (R_m^\rho)^{\underline{s}} \, A \, [(R_n^{\phi^T})^{\underline{s}}]^T. \]
Similar extensions may be obtained for other results, including the entries of the corresponding coefficient matrices.

\section{Conclusion}

By introducing the collapsing sum operators, we have provided a new combinatorial way to view Gaussian blur. We established the close connection between these concepts and also established a collection of theoretical results on the collapsing sum.

It would be interesting to study the collapsing sum as a matrix operator in its own right. For example, if $G$ is an abelian group and $G^{m\times n}$ represents the additive group of $m\times n$ matrices with entries in $G$, then the collapsing sum is a map from $G^{m\times n}$ to $G^{(m-1)\times (n-1)}$. What are the combinatorial and algebraic properties of this map?

\section*{Acknowledgements}
The author would like to thank Samuel Gutekunst for his invaluable guidance and support, as well as Mike Orrison, Elizabeth Sattler, and the anonymous reviewers, whose insightful comments and suggestions greatly increased the quality of this paper.


\bibliography{collapsing-sum-bibliography}
\bibliographystyle{abbrv}

\end{document}